\documentclass[12pt,oneside]{amsart}

\usepackage[a4paper]{geometry}
\usepackage[english]{babel}
\usepackage[latin1]{inputenc}
\usepackage[T1]{fontenc}
\usepackage{lmodern}
\usepackage[all]{xy}
\usepackage{amsfonts,amssymb}
\usepackage{amsmath}
\usepackage{enumerate}
\usepackage{breqn}
\usepackage{tikz}
\usepackage{pgfplots}
\usepackage[numbers,sort]{natbib}
\usepackage{bbm}

\newcommand{\e}{{\,{\rm e}}}

\newcommand{\PP}{\ensuremath{\mathbb{P}}}

\newcommand{\EE}{\ensuremath{\mathbb{E}}}

\newcommand{\I}{\ensuremath{\mathbf{I}}}
\newcommand{\J}{\ensuremath{\mathbf{J}}}
\newcommand{\cC}{\ensuremath{\mathcal{C}}}

\newcommand{\s}{\ensuremath{\omega}}

\newcommand{\val}{\ensuremath{\nu}}
\newcommand{\K}{\ensuremath{K}}
\renewcommand{\L}{\ensuremath{L}}

\newcommand{\R}{\ensuremath{\mathbb{R}}}

\newcommand{\bC}{\ensuremath{\mathbf{C}}}
\newcommand{\bU}{\ensuremath{\mathbf{U}}}
\newcommand{\bA}{\ensuremath{\mathbf{A}}}

\def\ind{{\mathbbm{1}}}
\newtheorem{lemma}{Lemma}
\newtheorem{theorem}{Theorem}

\newtheorem{remark}[lemma]{Remark}

\begin{document}

\title[Sampling without replacement]{Weighted sampling without replacement}
\author{Anna Ben-Hamou, Yuval Peres and Justin Salez}
\address{}
\email{}

\keywords{}
\subjclass[2010]{}

\begin{abstract}
Comparing concentration properties of uniform sampling with and without replacement has a long history which can be traced back to the pioneer work of Hoeffding \cite{hoeffding}. The goal of this short note is to extend this comparison to the case of non-uniform weights, using a coupling between samples drawn with and without replacement. When the items' weights are arranged in the same order as their values, we show that the induced coupling for the cumulative values is a submartingale coupling. As a consequence, the powerful Chernoff-type upper-tail estimates known for sampling with replacement automatically transfer to the case of sampling without replacement. For general weights, we use the same coupling to establish a sub-Gaussian concentration inequality. As the sample size approaches the total number of items, the variance factor in this inequality displays the same kind of sharpening as Serfling \citep{serfling} identified in the case of uniform weights. We also construct an other martingale coupling which allows us to answer a question raised by Luh and Pippenger \cite{luh2014large} on sampling in Polya urns with different replacement numbers.
\end{abstract}

\maketitle

\section{Introduction}

In a celebrated paper \citep{hoeffding}, Hoeffding first singled out a fruitful comparison between sampling with and without replacement: any linear statistics induced by uniform sampling without replacement in a finite population is less, in the convex order, than the one induced by sampling with replacement. In particular, all the Chernoff-type tail estimates that apply to sampling with replacement (the sample then being i.i.d.) automatically apply to sampling without replacement. As the sample size increases, it is natural to expect that sampling without replacement should concentrate even more, in the sense that, when the sample size approaches the total number of items, the variance should not be of the order of the number of sampled items, but of the number of \emph{unsampled} items. This was verified by Serfling in \cite{serfling}.

One natural question is to determine whether a similar comparison also holds when the sampling procedure is no longer uniform and when different items have different weights. 

More precisely, consider a collection of $N$ items $1\leq i\leq N$, each equipped with a \emph{weight} $\s(i)>0$ and  a \emph{value} of interest $\val(i)\in\R$. We assume that
\begin{displaymath}
\sum_{i=1}^N \s(i)=1\, .
\end{displaymath}
Let $X$ be the cumulative value of a sample of length $n\leq N$ drawn without replacement and with probability proportional to weights, i.e. 
\begin{eqnarray*}X & := & \val({\I_1})+\cdots+\val({\I_n}),\end{eqnarray*} 
where for each $n-$tuple $(i_1,\ldots,i_n)$ of distinct indices in $\{1,\ldots,N\}$, 
\begin{eqnarray*}
%\label{samplingwithout}
\PP\left((\I_1,\ldots,\I_n)=(i_1,\ldots,i_n)\right) & = & \prod_{k=1}^n\frac{\s({i_k})}{1-\s({i_1})-\cdots-\s({i_{k-1}})}\, .
\end{eqnarray*} 

A much simpler statistic is the one that arises when the sample is drawn with replacement, namely
\begin{eqnarray*}Y& := & \val({\J_1})+\cdots+\val({\J_n}),\end{eqnarray*}  where now for each $n-$tuple $(j_1,\ldots,j_n)\in\{1,\ldots,N\}^n$,
\begin{eqnarray*}
%\label{samplingwith}
\PP\left((\J_1,\ldots,\J_n)=(j_1,\ldots,j_n)\right) & = & \prod_{k=1}^n\s({j_k})\, .
\end{eqnarray*}

One particular case is when weights and values are arranged in the same order, \textit{i.e.}
\begin{eqnarray}
\label{assume}
\s(i)>\s(j) & \Longrightarrow & \val(i)\geq \val(j)\, .
\end{eqnarray}
\begin{theorem}
\label{th:main}
Assume that condition \eqref{assume} holds. Then $X$ is less than $Y$ in the \emph{increasing convex order}, i.e. for every non-decreasing, convex function $f\colon\R\to\R$,
\begin{eqnarray}
\label{icx}
\EE\left[f\left(X\right)\right] & \leq & \EE\left[f\left(Y\right)\right]\, .
\end{eqnarray}
\end{theorem}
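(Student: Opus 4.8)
The plan is to invoke the standard characterization of the increasing convex order through submartingale couplings: if one can realize $X$ and $Y$ on a common probability space so that $\EE[Y\mid X]\geq X$ almost surely, then for every non-decreasing convex $f$ one has $\EE[f(Y)]=\EE[\EE[f(Y)\mid X]]\geq \EE[f(\EE[Y\mid X])]\geq\EE[f(X)]$ by conditional Jensen and monotonicity. So everything reduces to building such a coupling, which I would generate from a single sequential procedure. Having drawn the without-replacement indices $\I_1,\dots,\I_{k-1}$, I draw $\I_k$ from the unsampled items proportionally to weights, and then set the with-replacement entry $\J_k$ equal to $\I_k$ unless a \emph{collision} occurs (an independent event of probability $W_{k-1}:=\s(\I_1)+\dots+\s(\I_{k-1})$), in which case $\J_k$ is re-drawn among the already-sampled items $\{\I_1,\dots,\I_{k-1}\}$ proportionally to weights. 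A direct check shows the $\J_k$ are i.i.d.\ with law $\s$, so $Y=\sum_k\val(\J_k)$ is indeed the with-replacement statistic while $X=\sum_k\val(\I_k)$.

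Next I would compute the drift of this coupling conditionally on the ordered sample $\I=(\I_1,\dots,\I_n)$. Since, given $\I$, each step independently produces a collision with probability $W_{k-1}$ and then a weight-proportional repeat among $\{\I_1,\dots,\I_{k-1}\}$, I get
\[
\EE[\val(\J_k)-\val(\I_k)\mid\I]=\sum_{l<k}\s(\I_l)\bigl(\val(\I_l)-\val(\I_k)\bigr),
\]
and summing over $k$,
\[
\EE[Y-X\mid\I]=\sum_{1\le l<k\le n}\s(\I_l)\bigl(\val(\I_l)-\val(\I_k)\bigr).
\]
This quantity is not pointwise non-negative (a light item may be drawn before a heavy one), so one cannot hope for $\EE[Y\mid\I]\geq X$; the submartingale inequality can be recovered only after averaging over the order of appearance. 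The point is that $X$ depends on $\I$ only through the unordered set $\S:=\{\I_1,\dots,\I_n\}$, so it suffices to prove $\EE[Y-X\mid\S]\geq0$, i.e.\ that the double sum above has non-negative conditional mean given $\S$.

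I would then expand this conditional mean over unordered pairs. Writing $p_{ab}:=\PP(a\text{ precedes }b\mid\S)$ for $a,b\in\S$, the contribution of the pair $\{a,b\}$ is $(\val(a)-\val(b))\bigl(p_{ab}\s(a)-p_{ba}\s(b)\bigr)$. Assumption \eqref{assume} makes weights and values concordant, so I may assume $\val(a)\ge\val(b)$ and $\s(a)\ge\s(b)$ simultaneously; the term is then non-negative as soon as $p_{ab}\ge p_{ba}$, because $p_{ab}\s(a)\ge p_{ba}\s(a)\ge p_{ba}\s(b)$. Thus the whole argument collapses to the intuitive statement that, conditionally on which items are sampled, a heavier item is at least as likely to appear before a lighter one as the reverse.

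The main obstacle is this last ordering inequality, which I would establish by a swap bijection on the orderings of the fixed set $\S$. The without-replacement law assigns to an ordering the explicit product of factors $1/(1-\text{(weight drawn so far)})$, whose numerators are constant across all orderings of $\S$. Comparing an ordering in which $a$ precedes $b$ with the one obtained by exchanging the positions of $a$ and $b$, every differing factor corresponds to a step whose weight drawn so far is larger in the former (it contains $\s(a)$ rather than $\s(b)$); hence the ordering with $a$ first is the more likely of the two, and summing over the bijection yields $p_{ab}\ge p_{ba}$ whenever $\s(a)\ge\s(b)$. Combining this with the pairwise expansion gives $\EE[Y-X\mid\S]\ge0$, hence $\EE[Y\mid X]\ge X$, and the increasing convex comparison \eqref{icx} follows from the submartingale characterization recalled at the outset.
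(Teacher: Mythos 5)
Your proof is correct, and it reaches the submartingale inequality $\EE[Y\mid X]\geq X$ by a genuinely different route than the paper. The differences are twofold. First, the coupling: the paper generates the i.i.d.\ sequence $(\J_k)_{k\geq 1}$ first and \emph{screens} it, setting $\I_k=\J_{T_k}$ where $T_k$ is the time of the $k$-th new item; you go in the opposite direction, generating the without-replacement sample first and superimposing independent collision events (your construction is valid: the conditional law of your $\J_k$ given all earlier randomness is exactly $\s$, which yields both the correct marginal and the independence). Second, the core argument: the paper never computes a drift. It uses exchangeability of $(\J_1,\dots,\J_n)$ together with the invariance of $X$ under permuting these draws to reduce the claim to $\EE[\val(\J_1)\mid X]\geq X/n$, shows by a swap bijection that, given the sampled set $\bA$, the probabilities $\PP(\J_1=i_j\mid\bA)$ are ordered like the values $\val(i_j)$, and concludes with Chebyshev's sum inequality. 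You instead compute $\EE[Y-X\mid \I_1,\dots,\I_n]=\sum_{l<k}\s(\I_l)(\val(\I_l)-\val(\I_k))$ explicitly, note (correctly) that this is not pointwise non-negative, condition on the unordered set, and decompose over pairs, reducing everything to the precedence inequality $p_{ab}\geq p_{ba}$ when $\s(a)\geq\s(b)$ --- which you prove by essentially the same transposition argument the paper applies to first-position probabilities. As for what each buys: the paper's screening coupling doubles as the engine of its Theorem \ref{thm:gaussian} (it represents $X$ as a function of i.i.d.\ variables, feeding the entropy method), and exchangeability plus Chebyshev keeps the computation minimal; your route gives an explicit formula for the conditional gap, which makes transparent exactly where the concordance hypothesis \eqref{assume} enters. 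One detail worth stating explicitly in your write-up: since \eqref{assume} is only a one-way implication, ties must be handled by choosing, for each pair, a labelling with $\val(a)\geq\val(b)$ and $\s(a)\geq\s(b)$ simultaneously; your case analysis does this, and in fact handles ties more carefully than the paper's own phrasing, which asserts $\val(i_k)\geq\val(i_\ell)\Rightarrow\s(i_k)\geq\s(i_\ell)$.
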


Our second result is a sub-Gaussian concentration inequality for $X$ in the case of arbitrary weights $(\s(i))_{i=1}^N$. Define
\begin{displaymath}
\Delta:=\max_{1\leq i\leq N}\val(i)-\min_{1\leq i\leq N}\val(i)\qquad\text{and}\qquad\alpha=\frac{\min_{1\leq i\leq N}\s(i)}{\max_{1\leq i\leq N}\s(i)}\, .
\end{displaymath}
%and
%\begin{displaymath}
%\alpha=\frac{\min_{1\leq i\leq N}\s(i)}{\max_{1\leq i\leq N}\s(i)}\, .
%\end{displaymath}
The case $\alpha=1$ (uniform sampling) was analysed by Serfling \citep{serfling}. 

\begin{theorem}
\label{thm:gaussian}
Assume $\alpha<1$. For all $t>0$,
\begin{displaymath}
\max\left\{\PP\left(X-\EE X >t\right),\PP\left(X-\EE X<-t\right)\right\}\leq \exp\left(-\frac{t^2}{2v}\right)\, ,
\end{displaymath}
with
\begin{eqnarray}\label{eq:def-variance}
v&=&
\min\left(4\Delta^2 n\, ,\, \frac{1+4\alpha}{\alpha(1-\alpha)}\Delta^2 N \left(\frac{N-n}{N}\right)^\alpha\right) 
\end{eqnarray}
%\begin{eqnarray}\label{eq:def-variance}
%v
% &= & \min\left(4\Delta^2 n, 2\left(\frac{\s(1)}{\s(N)}+2\right)\Delta^2 N\left(\frac{N-n}{N}\log\left(\frac{N}{N-n}\right)\right)^{\frac{\s(N)}{\s(1)}}\right) \, .
%\end{eqnarray}
\end{theorem}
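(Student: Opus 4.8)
The plan is to bound the upper tail by the exponential-moment method applied to the Doob martingale of $X$, with the with/without-replacement coupling supplying the increment estimates; the two expressions inside the minimum in \eqref{eq:def-variance} come from two different increment bounds. Since replacing each $\val(i)$ by $-\val(i)$ leaves $\Delta$ and $\alpha$ unchanged while turning the upper tail into the lower one, it is enough to control $\PP(X-\EE X>t)$. Set $\mathcal F_k=\sigma(\I_1,\dots,\I_k)$, write $A_k=\{\I_1,\dots,\I_k\}$ and $R_k=1-\sum_{j\le k}\s(\I_j)$ for the weight still available after $k$ draws, and let $M_k=\EE[X\mid\mathcal F_k]$, so that $M_0=\EE X$, $M_n=X$ and $D_k:=M_k-M_{k-1}$ is a martingale difference. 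For the first bound I would note that changing the identity of the $k$-th draw alters $\val(\I_k)$ by at most $\Delta$ and, after coupling the two resulting without-replacement continuations so that they differ in a single coordinate, alters the conditional expectation of the remaining $n-k$ draws by at most $\Delta$; hence $|D_k|\le 2\Delta$ and the Azuma--Hoeffding inequality gives a sub-Gaussian bound with $v=4\Delta^2 n$.

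The engine behind the second, Serfling-type bound is the rapid decay of the available weight. Because the largest remaining weight dominates the average remaining weight, at the $k$-th step one has $\s(\I_k)\ge\min_i\s(i)=\alpha\max_i\s(i)\ge \alpha R_{k-1}/(N-k+1)$, so that
\[
R_k\;\le\;R_{k-1}\Bigl(1-\frac{\alpha}{N-k+1}\Bigr).
\]
Iterating and comparing $\sum_{j\le k}(N-j+1)^{-1}$ to $\log\frac{N}{N-k}$ yields the pathwise estimate $R_k\lesssim\bigl(\tfrac{N-k}{N}\bigr)^{\alpha}$, which is precisely where the factor $\bigl((N-n)/N\bigr)^{\alpha}$ will come from.

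To turn this into the second variance factor I would feed the coupling into a sharper increment estimate. The guiding model is the uniform case, where the continuation is exchangeable and the increment collapses to $D_k=\frac{N-n}{N-k}\bigl(\val(\I_k)-\overline{\val}_{k-1}\bigr)$, with $\overline{\val}_{k-1}$ the mean value over the items still present; the shrinking prefactor is what sharpens the variance. The same effect should survive in the weighted setting in the weaker form of a conditional variance proxy that decreases with the available weight: conditionally on $\mathcal F_{k-1}$ the without-replacement draw agrees with an independent weight-proportional draw with probability $R_{k-1}$ and is a forced resample otherwise, and the partial cancellation between $\val(\I_k)$ and its influence on the conditional expectation of the future draws should leave an increment controlled by $R_{k-1}$. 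Equivalently, one may exploit the duality $X-\EE X=-\bigl(\sum_{i\notin A_n}\val(i)-\EE\sum_{i\notin A_n}\val(i)\bigr)$, so that the fluctuations of $X$ are carried by the small leftover set and vanish as $n\to N$. Summing the resulting proxies and inserting the decay estimate should produce a variance factor of order $\Delta^2 N\bigl((N-n)/N\bigr)^{\alpha}$.

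The hard part is twofold. First, weighted sampling without replacement is not exchangeable, so $D_k$ has no closed form and quantifying the cancellation that shrinks its proxy genuinely requires the coupling to compare the continuation expectations with and without the item $\I_k$ removed. Second, extracting the exact constant $\frac{1+4\alpha}{\alpha(1-\alpha)}$ is delicate: the factor $(1-\alpha)^{-1}$ reflects the summation of a geometric-type series in the available weight while $\alpha^{-1}$ comes from comparing the sum to $\int x^{\alpha-1}\D x$, and both demand a non-asymptotic estimate rather than the crude comparisons used for the decay lemma. Once both inequalities are established, the stated bound follows by taking $v$ to be the smaller of the two valid variance factors.
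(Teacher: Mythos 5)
There are genuine gaps in both halves of your plan. First, your bound $|D_k|\le 2\Delta$ rests entirely on the assertion that, after the $k$-th draw is changed from item $i$ to item $j$, the two without-replacement continuations can be coupled ``so that they differ in a single coordinate.'' In the uniform case this is immediate from exchangeability (swap $i$ and $j$ in the underlying random permutation), but for non-uniform weights that swap is not measure-preserving: the continuations are weighted samples from $T\setminus\{i\}$ and from $T\setminus\{j\}$, whose sampling probabilities differ at every subsequent step, and the existence of an almost-sure one-coordinate coupling between them is a nontrivial claim that you neither prove nor reduce to anything known. This obstruction is precisely why the paper abandons the Doob martingale of $X$ along $(\I_1,\dots,\I_n)$ altogether: there, $X$ is written via the screening coupling as a function of the i.i.d.\ with-replacement variables $(\J_i)_{i\geq 1}$, and the entropy method (a modified logarithmic Sobolev inequality applied to the truncations $X_t=f(\J_1,\dots,\J_t)$, followed by a limit $t\to\infty$) is applied in that i.i.d.\ setting, where resampling a single coordinate $\J_i$ has a transparent effect: $X$ can change only if the resampled copy falls outside $\{\I_1,\dots,\I_n\}$, or if $\J_i$ was the unique occurrence of some sampled item before $T_{n+1}$.

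Second, and more seriously, the Serfling-type bound --- which is the real content of the theorem --- is conjectured rather than proved: the ``increment controlled by $R_{k-1}$'' is never given a precise formulation, the cancellation it relies on is exactly the point at issue (you concede this yourself), and even granting such a conditional variance proxy you would still need a Freedman- or entropy-type argument, together with bounded increments, to get a sub-Gaussian (not merely sub-exponential) tail with an explicit constant. Your pathwise decay estimate $R_k\lesssim\left(\frac{N-k}{N}\right)^{\alpha}$ is correct, and indeed your $R_k$ is the paper's $1-\sigma_k$ with $\sigma_k=\s(\I_1)+\dots+\s(\I_k)$; the paper uses the closely related inequality $\frac{1-\sigma_n}{1-\sigma_{k-1}}\le\frac{1}{\alpha}\,\frac{N-n}{N-k+1}$. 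But in the paper this enters through exact conditional computations --- via the lemma that, given $(\I_1,\dots,\I_n)$, the inter-arrival times $T_k-T_{k-1}$ are independent geometrics with parameters $1-\sigma_{k-1}$ --- which yield closed-form bounds on the two influence terms ($A\le \frac{1}{\alpha}(N-n)\log\frac{N}{N-n}$ and $B\le\frac{2}{1-\alpha}N\left(\frac{N-n}{N}\right)^{\alpha}$), whence the constant $\frac{1+4\alpha}{\alpha(1-\alpha)}$. As it stands, your proposal derives neither the factor $4\Delta^2 n$ nor the factor $\frac{1+4\alpha}{\alpha(1-\alpha)}\Delta^2 N\left(\frac{N-n}{N}\right)^{\alpha}$.
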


We also answer a question raised by \cite{luh2014large}. The problem is to compare linear statistics induced by sampling in Polya urns with replacement number $d$ versus $D$, for positive integers $d,D$ with $D>d\geq 1$. 

%\subsection{Sampling in $d$ and $D$-Polya urns}
%\label{subsec:polya}

Let $\bC$ be a population of $N$ items, labelled from $1$ to $N$, each item $i$ being equipped with some value $\val(i)$. Let $d<D$ be two positive integers. For $n\geq 1$, let $(\K_1,\dots,\K_n)$ and $(\L_1,\dots,\L_n)$ be samples generated by sampling in Polya urns with initial composition $\bC$ and replacement numbers $d$ and $D$ respectively, \textit{i.e.} each time an item is picked, it is replaced along with $d-1$ (resp. $D-1$) copies. We say that $(\K_1,\dots,\K_n)$ (resp. $(\L_1,\dots,\L_n)$) is a $d$-Polya (resp. $D$-Polya) sample. Let
\begin{eqnarray*}
W&=&\val(\K_1)+\dots +\val(\K_n)\, ,\\
Z&=&\val(\L_1)+\dots +\val(\L_n)\, .
\end{eqnarray*}

\begin{theorem}\label{thm:polya}
The variable $W$ is less than $Z$ in the \emph{convex order}, i.e. for every convex function $f\colon\R\to\R$,
\begin{eqnarray*}
\EE\left[f\left(W\right)\right] & \leq & \EE\left[f\left(Z\right)\right]\, .
\end{eqnarray*}
\end{theorem}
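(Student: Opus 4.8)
The plan is to exploit the de Finetti / Chinese restaurant structure of the P\'olya urn and to realize the comparison through a coagulation coupling of the underlying random partitions.

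First I would record the standard decomposition of a P\'olya sequence. Writing $c_i$ for the initial number of copies of item $i$ and $c=\sum_i c_i$, a $d$-P\'olya sample of length $n$ can be generated in two independent stages: draw a random partition $\Pi_d$ of $[n]:=\{1,\dots,n\}$ from the Chinese restaurant process with concentration $\theta_d=c/(d-1)$, and then attach to each block $B\in\Pi_d$ an independent colour $\xi_B$ with law $\PP(\xi_B=i)=c_i/c$; the draws sharing a block are exactly the repeated copies produced by the reinforcement. (For $d=1$ one has $\theta_d=\infty$ and $\Pi_d$ is the partition into singletons, recovering i.i.d.\ sampling.) With this description,
\begin{eqnarray*}
W &=& \sum_{B\in\Pi_d}|B|\,\val(\xi_B)\, ,
\end{eqnarray*}
and likewise $Z=\sum_{B\in\Pi_D}|B|\,\val(\xi_B)$ for the coarser concentration $\theta_D=c/(D-1)<\theta_d$.

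The coupling itself goes as follows. I would first sample $\Pi_d$ together with its colours, which produces $W$. Since $\theta_D<\theta_d$, the partition $\Pi_D$ is a coarsening of $\Pi_d$ in the sense of the coagulation identity for the Chinese restaurant process: one obtains a version of $\Pi_D$ by partitioning the blocks of $\Pi_d$ according to an independent Chinese restaurant process with the matching parameter, and then merging the blocks of $\Pi_d$ that fall in a common group. Whenever blocks $B_1,\dots,B_m$ of $\Pi_d$, carrying colours $\xi_1,\dots,\xi_m$, are merged into a single block of $\Pi_D$, I assign to that block the \emph{size-biased} colour $\xi_J$, where $\PP(J=j\mid \text{sizes, colours})=|B_j|/\sum_{\ell}|B_\ell|$. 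Because the block sizes are independent of the colours, $\xi_J$ is again distributed as $c_i/c$ and the colours attached to distinct blocks of $\Pi_D$ remain independent, so $(\Pi_D,\xi)$ has exactly the $D$-P\'olya law and $Z$ has the correct marginal. On the other hand, conditionally on $\Pi_d$ and its colours, the merged block contributes $(\sum_\ell|B_\ell|)\,\val(\xi_J)$ with conditional mean $\sum_j|B_j|\,\val(\xi_j)$, which is precisely its contribution to $W$. Summing over all merges gives $\EE[Z\mid \Pi_d,\xi]=W$, hence $\EE[Z\mid W]=W$: the coupling is a martingale coupling, and the claimed convex order follows from conditional Jensen (equivalently, Strassen's theorem).

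The main obstacle is the second stage, namely pinning down the coagulation identity with the correct target concentration $\theta_D$ and checking carefully that the colour bookkeeping reproduces the $D$-P\'olya law --- in particular that size-biasing by block sizes, which are independent of the colour values, leaves the colour marginals equal to $c_i/c$ and keeps distinct blocks independent. The martingale computation and the passage to the convex order are then routine. If one prefers to avoid invoking the coagulation identity in full generality, the same elementary merge step (replacing two blocks by their union with a size-biased colour) is already mean preserving, so one may instead build $\Pi_D$ from $\Pi_d$ by a sequence of such pairwise merges and treat the case $D=d+1$ first, the general case following by transitivity of the convex order.
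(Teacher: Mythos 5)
Your reduction of P\'olya sampling to a Chinese restaurant process is correct: with initial composition $(c_i)$ and $c=\sum_i c_i$, the predictive probabilities of a $d$-P\'olya sample coincide with those of the Blackwell--MacQueen urn with concentration $\theta_d=c/(d-1)$ and base measure $H(\{i\})=c_i/c$, so indeed $W=\sum_{B\in\Pi_d}|B|\,\val(\xi_B)$, and your size-biased colour assignment would make the coupling a martingale \emph{if} the partition coupling existed. The gap is the step you call ``the coagulation identity'': for the one-parameter (discount-zero) family it is false. Coagulating a $\mathrm{CRP}(\theta_d)$ partition by an \emph{independent} $\mathrm{CRP}(\gamma)$ partition of its blocks does not produce a $\mathrm{CRP}(\theta_D)$ partition, for any $\gamma$. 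Concretely, take $N=2$, $d=2$, $D=3$ (so $\theta_d=2$, $\theta_D=1$) and $n=3$. Matching the two-element marginal $\PP(1\sim 2)=\tfrac12$ forces $\tfrac13+\tfrac23\cdot\tfrac{1}{1+\gamma}=\tfrac12$, i.e.\ $\gamma=3$. But then the coagulated partition equals $\{1,2\},\{3\}$ with probability
\begin{displaymath}
\frac16\cdot\frac34\;+\;\frac13\cdot\frac{3}{20}\;=\;\frac{7}{40}\, ,
\end{displaymath}
the first term coming from $\Pi_d=\{1,2\},\{3\}$ with its two blocks kept apart, the second from $\Pi_d$ being all singletons with exactly the blocks $\{1\},\{2\}$ merged; whereas $\mathrm{CRP}(1)$ assigns this partition probability $\tfrac16$. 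Pitman's coagulation--fragmentation duality cannot rescue this: it says that the coagulation of $\mathrm{PD}(\alpha,\theta)$ by an independent $\mathrm{PD}(\beta,\theta/\alpha)$ is $\mathrm{PD}(\alpha\beta,\theta)$, so it involves the two-parameter family in an essential way and degenerates when both discounts vanish, which is exactly your situation. Your fallback via pairwise merges and the case $D=d+1$ inherits the same defect: what needs proof is not that a merge is mean-preserving (it is), but that some merging scheme produces the correct output law, and that is precisely what fails for independent merging.

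A coarsening coupling of $\Pi_d$ and $\Pi_D$ may well exist (in the example above one can be built by hand), but its merging mechanism must depend on the block structure of $\Pi_d$, and you have not constructed it; as written, the argument is incomplete at its central step. Note that the paper's proof avoids partition identities altogether: it couples the two samples by stacking two urns, padding the $d$-urn with unlabelled balls so that the urns always contain equally many balls, and letting each draw from the $D$-urn select the ball directly below it in the $d$-urn; the martingale property $\EE[Z\,|\,W]=W$ then follows from an elementary exchangeability argument on successive draws. If you want to salvage your approach, the missing ingredient is exactly a refinement-order coupling of one-parameter CRPs with different concentrations, which is a nontrivial (and, to my knowledge, nonstandard) fact.
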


\begin{remark}
\cite{luh2014large} proved a similar result in the case where the first sample is drawn without replacement in $\bC$ and the second is a $D$-Polya sample, for $D\geq 1$.
\end{remark}

\section{Related work}

Weighted sampling without replacement, also known as \emph{successive sampling}, appears in a variety of contexts (see \cite{successive1, successive4, gordon, successive3}). When $n<<N$, it is natural to expect $Y$ to be a good approximation of $X$. For instance, the total-variation distance between $\PP\left(\I_{n+1}\in\,\cdot\;\Big| (\I_k)_{k=1}^n\right)$ and $\PP\left(\J_1\in\cdot\;\right)$ is given by $\displaystyle{\sum_{k=1}^n \s(\I_k)}$, which is $O(n/N)$ provided all the weights are $O(1/N)$.

Under the monotonicity assumption \eqref{assume}, Theorem \ref{th:main} establishes an exact strong stochastic ordering between $X$ and $Y$. Since $\J_1,\ldots,\J_n$ are independent copies of $\I_1$, the innumerable results on sums of independent and identically distributed random variables apply to $Y$. In particular, Chernoff's bound
\begin{eqnarray}
\label{bernstein}
\PP\left(Y\geq a\right) & \leq & \exp\left(n\Lambda(\theta)-\theta a\right)\, ,
\end{eqnarray}
yields a variety of sharp concentration results based on efficient controls on the log-Laplace transform $\Lambda(\theta)=\ln\EE[e^{\theta\val(\I_1)}]$. This includes the celebrated Hoeffding and Bernstein inequalities, see the book \cite{concentration}. Theorem \ref{th:main} implies in particular that all upper-tail estimates derived from Chernoff's bound (\ref{bernstein}) apply to $X$ without modification.

The condition \eqref{assume} describes a sampling procedure which is sometimes referred to as size-biased sampling without replacement. It arises in many situations, including ecology, oil discovery models, in the construction of the Poisson-Dirichlet distribution (\cite{permutation1,permutation2}), or in the configuration model of random graphs (\cite{configuration1,configuration2}).

Stochastic orders provide powerful tools to compare distributions of random variables and processes, and they have been used in various applications \cite{ordering2,ordering3,ordering}. As other stochastic relations, the increasing convex order is only concerned with marginal distributions. One way of establishing (\ref{icx}) is thus to carefully construct two random variables $X$ and $Y$ with the correct marginals on a common probability space, in such a way that 
\begin{eqnarray}
\label{submartingale}
X & \leq & \EE[Y|X]
\end{eqnarray}
holds almost-surely. The existence of such a \emph{submartingale coupling} clearly implies (\ref{icx}), thanks to Jensen's inequality. Quite remarkably, the converse is also true, as proved by Strassen \cite{strassen}. Similarly, Theorem \ref{thm:polya} is equivalent to the existence of a \emph{martingale coupling} $(W,Z)$.

\begin{remark}[The uniform case] When $\s$ is constant, \textit{i.e.} $\alpha=1$, the sequence $(\I_1,\ldots,\I_n)$ is exchangeable. In particular, $\EE[X]=\EE[Y]$,  forcing equality in (\ref{submartingale}). Thus, (\ref{icx}) automatically extends to arbitrary convex functions. This important special case was established five decades ago by Hoeffding in his seminal paper \cite{hoeffding}. Since then, improvements have been found as $n/N$ approaches $1$ \cite{serfling, bardenet}. Another remarkable feature of uniform sampling without replacement is the \emph{negative association} of the sequence $(\val(\I_1),\ldots,\val(\I_n))$ \cite{joag1983negative}. However, this result seems to make crucial use of the exchangeability of $(\I_1,\dots,\I_n)$, and it is not clear whether it can be extended to more general weights, \emph{e.g.} to monotone weights satisfying \eqref{assume}. Non-uniform sampling without replacement can be more delicate and induce counter-intuitive correlations, as highlighted by Alexander \cite{alexander1989counterexample}, who showed that for two fixed items, the indicators that each is in the sample can be positively correlated.
\end{remark}

Theorem \ref{thm:gaussian} holds under the only assumption that $\alpha<1$, but the domain of application that we have in mind is when $\alpha$ is bounded away from $0$ and $1$. In this domain, when $n\leq qN$, for some fixed $0<q<1$, equation \eqref{eq:def-variance} gives $v=O(\Delta^2n)$, which corresponds to the order of the variance factor in the classical Hoeffding inequality. When $n/N\underset{N\to\infty}\to 1$, then it can be improved up to $v=O\left(\Delta^2 n \left(\frac{N-n}{N}\right)^\alpha\right)$. In the uniform case $\alpha=1$, Serfling \citep{serfling} showed that $X$ satisfies a sub-Gaussian inequality with $v=\Delta^2 n\frac{N-n+1}{4N}$, implying that the variance factor has the order of the minimum between the number of sampled and unsampled items.

\subsection*{Organization.}
Both Theorems \ref{th:main} and \ref{thm:gaussian} rely on a coupling between samples drawn with and without replacement, which is constructed in Section \ref{sec:coupling}. Then, Theorems \ref{th:main}, \ref{thm:gaussian}, \ref{thm:polya} are proved respectively in Sections \ref{sec:proof-submartingale}, \ref{sec:proof-gaussian} and \ref{sec:proof-polya}.

\section{The coupling}
\label{sec:coupling}

The proofs of Theorem \ref{th:main} and \ref{thm:gaussian} rely on a particular coupling of samples drawn with and without replacement. This coupling is inspired by the one described in \cite{luh2014large} for the uniform case.

First generate an infinite sequence $(\J_k)_{k\geq 1}$ by sampling with replacement and with probability proportional to $(\s(i))_{i=1}^N$. Now, ``screen'' this sequence, starting at $\J_1$ as follows: for $1\leq k\leq N$, set
\begin{eqnarray*}
\I_k&=&\J_{T_k}\, ,
\end{eqnarray*}
where $T_k$ is the random time when the $k^{\text{th}}$ distinct item appears in $(\J_i)_{i\geq 1}$. 

The sequence $(\I_1,\dots,\I_n)$ is then distributed as a sample without replacement. As above, we define $\displaystyle{X=\sum_{k=1}^n \val(\I_k)}$ and $\displaystyle{Y=\sum_{k=1}^n \val(\J_k)}$.

\section{Proof of Theorem \ref{th:main}}
\label{sec:proof-submartingale}

Consider the coupling of $X$ and $Y$ described above (Section \ref{sec:coupling}). Under the monotonicity assumption \eqref{assume}, we show that $(X,Y)$ is a submartingale coupling in the sense of \eqref{submartingale}. As the sequence $(\J_1,\dots,\J_n)$ is exchangeable and as permuting $\J_i$ and $\J_j$ in this sequence does not affect $X$, it is sufficient to show that $\EE\left[\val(\J_1)\big| X\right]\geq X/n$.

Let $\{i_1,\dots_,i_n\}\subset\{1,\dots,N\}$ be a set of cardinality $n$, and let $\bA$ be the event $\{\I_1,\dots,\I_n\}=\{i_1,\dots_,i_n\}$.

\begin{displaymath}
\EE\left[\val(\J_1) \Big| \bA \right]= \sum_{j=1}^n \PP\left(\J_1=i_j \Big| \bA \right)\val(i_j) \, .
\end{displaymath}

Let us now show that, for all $1\leq k\neq \ell\leq n$, if $\val(i_k)\geq \val(i_\ell)$, then $\PP\left(\J_1 =i_k\Big| \bA \right)$ is not smaller than $\PP\left(\J_1 =i_\ell\Big| \bA \right)$. First, by \eqref{assume}, one has $\s(i_k)\geq \s(i_\ell)$. Letting $\mathfrak{S}_n$ be the set of permutations of $n$ elements, one has
\begin{eqnarray*}
\PP\left(\{\J_1=i_k\}\cap \mathbf{A}\right)
&=& \sum_{\pi\in \mathfrak{S}_n, \pi(1)=k} p(\pi)\, ,
\end{eqnarray*}
where
\begin{eqnarray*}
p(\pi)&:=&\s(i_{\pi(1)})\frac{\s(i_{\pi(2)})}{1-\s(i_{\pi(1)})} \cdots \frac{\s(i_{\pi(n)})}{1-\s(i_{\pi(1)})-\s(i_{\pi(2)})-\s(i_{\pi(n-1)})}
\end{eqnarray*}
Now, each permutation $\pi$ with $\pi(1)=k$ can be uniquely associated with a permutation $\pi^\star$ such that $\pi^\star(1)=\ell$, by performing the switch: $\pi^\star(\pi^{-1}(\ell))=k$, and letting $\pi(j)=\pi^\star(j)$, for all $j\not\in \{1,\pi^{-1}(\ell)\}$. Observe that $p(\pi)\geq p(\pi^\star)$. Thus
\begin{eqnarray*}
\PP\left(\J_1 =i_k\Big| \mathbf{A}\right)-\PP\left(\J_1 =i_\ell\Big| \mathbf{A}\right)&=&\frac{1}{\PP\left(\mathbf{A}\right)}\sum_{\pi\in \mathfrak{S}_n, \pi(1)=k}(p(\pi)-p(\pi^\star)) \geq 0\, .
\end{eqnarray*}
Consequently, by Chebyshev's sum inequality,
\begin{eqnarray*}
\EE\left[\val(\J_1) \Big| \mathbf{A}\right]&=&n\,\frac{1}{n}\sum_{j=1}^n \PP\left(\J_1=i_j \Big| \bA \right)\val(i_j)\\
&\geq & n\left(\frac{1}{n}\sum_{j=1}^n \PP\left(\J_1=i_j \Big| \bA \right)\right)\left(\frac{1}{n}\sum_{j=1}^n \val(i_j)\right)\\
&=& \frac{\sum_{j=1}^n \val(i_j)}{n}\, ,
\end{eqnarray*}
and $\EE\left[Y\Big| X\right]\geq X$.

\hfill\qedsymbol

\section{Proof of Theorem \ref{thm:gaussian}}
\label{sec:proof-gaussian}

We only need to show that the bound in Theorem \ref{thm:gaussian} holds for $\PP\left[X-\EE X >t\right]$. Indeed, replacing $X$ by $-X$ (i.e. changing all the values to their opposite) does not affect the proof. Hence, the bound on $\PP\left[X-\EE X <-t\right]$ will follow directly.

Theorem \ref{thm:gaussian} is proved using the same coupling between sampling with and without replacement as described in Section \ref{sec:coupling}. 

Note that, in this coupling, $X$ is a function of the \textsc{i.i.d.} variables $(\J_i)_{i\geq 1}$:
\begin{equation}\label{eq:X-alternative}
X=\sum_{i=1}^{+\infty} \val(\J_i)\ind_{\{\J_i\not\in\{\J_1,\dots,\J_{i-1}\}\}}\ind_{\{T_n\geq i\}}\, .
\end{equation}
As such, one may obtain concentration results for $X$ by resorting to the various methods designed for functions of independent variables.

The proof relies on the \emph{entropy method} as described in Chapter 6 of \citep{concentration}. We will show that $X$ is such that, for all $\lambda>0$,
\begin{equation}\label{eq:entropy}
\lambda\EE\left[X\e^{\lambda X}\right]-\EE\left[\e^{\lambda X}\right]\log\EE\left[\e^{\lambda X}\right] \leq \frac{\lambda^2 v}{2}\EE\left[\e^{\lambda X}\right]\, ,
\end{equation}
for $v$ as in \eqref{eq:def-variance}. Then, a classical argument due to Herbst (see \citep{concentration}, Proposition 6.1) ensures that, for all $\lambda>0$,
\begin{displaymath}
\log\EE\left[\e^{\lambda(X-\EE X)}\right]\leq \frac{\lambda^2 v}{2}\, ,
\end{displaymath}
and thus, for all $t>0$,
\begin{displaymath}
\PP\left(X-\EE X >t\right)\leq \exp\left(-\frac{t^2}{2v}\right)\, ,
\end{displaymath}
that is, the upper-tail of $X$ is sub-Gaussian with variance factor $v$. 
Let us establish inequality \eqref{eq:entropy}. For $t\geq 1$, consider the truncated variable $X_t$ defined by summing only from $1$ to $t$ in \eqref{eq:X-alternative}, i.e.
\begin{eqnarray*}
X_t&=&\sum_{i=1}^t \val(\J_i)\ind_{\{\J_i\not\in\{\J_1,\dots,\J_{i-1}\}\}}\ind_{\{T_n\geq i\}}\\
&:=&f(\J_1,\dots,\J_t)\, .
\end{eqnarray*}
Note that $X_t$ converges to $X$ almost surely as $t\to +\infty$.
Then, for all $1\leq i\leq t$, consider the perturbed variable $X_t^i$ which is obtained by replacing $\J_i$ by an independent copy $\J_i'$, i.e.
\begin{displaymath}
X_t^i=f(\J_1,\dots,\J_{i-1},\J_i',\J_{i+1},\dots,\J_t)\, ,
\end{displaymath}
and let $X^i$ be the almost sure limit of $X_t^i$, as $t\to +\infty$.
Theorem 6.15 of \citep{concentration} implies that, for all $\lambda>0$,
\begin{eqnarray}\label{eq:logSob_Xt}
\lambda\EE\left[X_t\e^{\lambda X_t}\right]-\EE\left[\e^{\lambda X_t}\right]\log\EE\left[\e^{\lambda X_t}\right]&\leq& \sum_{i=1}^t\EE\left[\lambda^2\e^{\lambda X_t}(X_t-X_t^i)_+^2\right]\, .
\end{eqnarray}
We now show that this inequality still holds when we let $t$ tend to $+\infty$. Let $\displaystyle{\val_{\text{max}}=\max_{1\leq j\leq N}\val(j)}$. For all $t\geq 1$, the variable $X_t$ is almost surely bounded by $n\val_{\text{max}}$. Hence, the left-hand side of \eqref{eq:logSob_Xt} tends to the left-hand side of \eqref{eq:entropy}. As for the right-hand side, we have that, for all $1\leq i\leq t$,
\begin{displaymath}
\EE\left[\lambda^2\e^{\lambda X_t}(X_t-X_t^i)_+^2\right]\leq \lambda^2\e^{\lambda n\val_{\text{max}}}\Delta^2\PP(i\leq T_n)\, ,
\end{displaymath}
and $\sum_{i=1}^{+\infty}\PP[i\leq T_n]=\EE[T_n]<+\infty$. Hence, by dominated convergence, the right-hand side also converges, and we obtain
\begin{eqnarray*}
\lambda\EE\left[X\e^{\lambda X}\right]-\EE\left[\e^{\lambda X}\right]\log\EE\left[\e^{\lambda X}\right]&\leq& \sum_{i=1}^{+\infty}\EE\left[\lambda^2\e^{\lambda X}(X-X^i)_+^2\right]\, .
\end{eqnarray*}
Recall that $(\I_1,\dots,\I_n)$ is the sequence of the first $n$ distinct items in $(\J_i)_{i\geq 1}$ and that $X$ is measurable with respect to $\sigma(\I_1,\dots,\I_n)$, so that
\begin{displaymath}
\sum_{i=1}^{+\infty}\EE\left[\lambda^2\e^{\lambda X}(X-X^i)_+^2\right]=\EE\left[\lambda^2\e^{\lambda X}\EE\left[\sum_{i=1}^{+\infty}(X-X^i)_+^2\Big| \I_1,\dots,\I_n\right]\right]\, .
\end{displaymath}
Thus, letting
\begin{displaymath}
V:=\EE\left[\sum_{i=1}^{+\infty}(X-X^i)_+^2\Big| \I_1,\dots,\I_n\right]\, ,
\end{displaymath}
our task comes down to showing that
\begin{displaymath}
V\leq \frac{v}{2}\quad\text{a.s.}\, .
\end{displaymath}
Observe that for all $i\geq 1$, we have $(X-X^i)_+^2\leq\Delta^2$ and that $X=X^i$ unless $i\leq T_n$ and one of the following two events occurs:
\begin{itemize}
\item $\J_i'\not\in\{\I_1,\dots,\I_n\}$;
\item the item $\J_i$ occurs only once before $T_{n+1}$.
\end{itemize}
Let us define 
\begin{eqnarray*}
A&=& \sum_{i=1}^{+\infty}\EE\left[\ind_{\{\J_i'\not\in\{\I_1,\dots,\I_n\}\}}\ind_{i\leq T_n}\Big| \I_1,\dots,\I_n\right]\, ,
\end{eqnarray*}
and 
\begin{eqnarray*}
B&=& \sum_{k=1}^{n}\EE\left[\ind_{\{\exists !\, i<T_{n+1},\, \J_i=\I_k\}}\Big| \I_1,\dots,\I_n\right]\, ,
\end{eqnarray*}
so that $V \leq\Delta^2\left(A+B\right)$. Since $\J_i'$ is independent of everything else and since $\sigma_n:=\s(\I_1)+\dots\s(\I_n)$ is a measurable function of $(\I_1,\dots,\I_n)$, we have
\begin{eqnarray*}
A&=& (1-\sigma_n)\EE\left[T_n\Big| \I_1,\dots,\I_n\right]\, .
\end{eqnarray*}
We use the following fact.
\begin{lemma}\label{lem:geom}
For $1\leq k\leq n$, let $\tau_k=T_k-T_{k-1}$. Conditionally on $(\I_1,\dots,\I_n)$, the variables $(\tau_k)_{k=1}^n$ are independent and for all $1\leq k\leq n$, $\tau_k$ is distributed as a Geometric random variables with parameters $1-\sigma_{k-1}$.
\end{lemma}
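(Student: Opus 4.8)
The plan is to compute the joint conditional law of the inter-arrival times directly from the i.i.d. structure of $(\J_i)_{i\geq 1}$. Fix a target ordered tuple of distinct items $(a_1,\dots,a_n)$ and candidate gaps $(t_1,\dots,t_n)$ with each $t_k\geq 1$, and write $s_{k-1}=\s(a_1)+\dots+\s(a_{k-1})$ (so $s_0=0$); on the event $\{\I_j=a_j\ \forall j\}$ this is exactly $\sigma_{k-1}$. First I would describe precisely which realizations of $(\J_i)_{i\geq 1}$ produce $\{\I_j=a_j\ \forall j\}\cap\{\tau_k=t_k\ \forall k\}$: the draw at time $T_1=1$ must be $a_1$, and inside the $k$-th block, consisting of the draws at times $T_{k-1}+1,\dots,T_k$, the first $t_k-1$ draws must each lie in the already-seen set $\{a_1,\dots,a_{k-1}\}$ while the draw at time $T_k$ must be the new item $a_k$.

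Next I would sum over the admissible repeat patterns. Since the $\J_i$ are independent, the probability of a fully specified sequence is the product of the weights of the drawn items; summing the $t_k-1$ repeat draws in block $k$ independently over $\{a_1,\dots,a_{k-1}\}$ contributes a factor $s_{k-1}^{\,t_k-1}$, while the terminal draw contributes $\s(a_k)$. This yields
\begin{displaymath}
\PP\left(\{\I_j=a_j\ \forall j\}\cap\{\tau_k=t_k\ \forall k\}\right)=\prod_{k=1}^n s_{k-1}^{\,t_k-1}\,\s(a_k)\, .
\end{displaymath}
Summing over all gaps, a geometric series in each coordinate, convergent because $s_{k-1}<1$ for $k\leq n\leq N$, with the convention $0^0=1$ forcing $t_1=1$, recovers the marginal law $\prod_{k=1}^n \s(a_k)/(1-s_{k-1})$ of sampling without replacement, as it must.

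Finally, dividing the joint probability by this marginal gives
\begin{displaymath}
\PP\left(\tau_k=t_k\ \forall k \,\Big|\, \I_j=a_j\ \forall j\right)=\prod_{k=1}^n (1-s_{k-1})\,s_{k-1}^{\,t_k-1}\, ,
\end{displaymath}
which is precisely a product of geometric probability mass functions with success parameters $1-s_{k-1}$. The product form delivers both the conditional independence and the claimed marginal laws simultaneously, with the case $k=1$ giving $\tau_1=1$ almost surely. I expect the only delicate point to be the bookkeeping in the second step: one must verify that, given the ordered set of the first $n$ distinct items together with the gap sizes, the repeat draws within each block are genuinely unconstrained over the previously seen items, so that the sum over repeat patterns factorizes cleanly into the powers $s_{k-1}^{\,t_k-1}$. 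Everything else reduces to a routine geometric-series computation.
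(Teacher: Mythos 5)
Your proposal is correct and follows essentially the same route as the paper: both compute the joint law of $(\I_1,\dots,\I_n)$ and $(\tau_1,\dots,\tau_n)$ from the i.i.d.\ structure of $(\J_i)_{i\geq 1}$, obtaining the product $\prod_{k=1}^n s_{k-1}^{\,t_k-1}\s(a_k)$ (the paper writes the $k=1$ factor as $\ind_{\{t_1=1\}}$, which is your $0^0$ convention), and then read off the factorization into the marginal of the sample times geometric probability mass functions. The only cosmetic difference is that you re-derive the without-replacement marginal by summing geometric series and then divide, whereas the paper directly recognizes the joint probability as that marginal times $\prod_k \PP(G_k=t_k)$; both steps are immediate.
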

\begin{proof}
Let $(i_1,\dots,i_n)$ be an $n$-tuple of distinct elements of $\{1,\dots,N\}$ and let $t_1,\dots,t_n\geq 1$. Let also $(G_k)_{k=1}^n$ be independent Geometric random variables with parameter $(1-\s(i_1)-\dots -\s(i_{k-1}))$. We have
\begin{eqnarray*}
& &\PP\left((\tau_1,\dots,\tau_n)=(t_1,\dots,t_n),(\I_1,\dots,\I_n)=(i_1,\dots,i_n)\right)\\
& &\hspace{1cm}= \ind_{\{t_1=1\}} \s(i_1)\prod_{k=2}^n \left(\s(i_1)+\dots +\s(i_{k-1})\right)^{t_k-1}\s(i_k)\\
& &\hspace{1cm}=\prod_{k=1}^n\frac{\s({i_k})}{1-\s({i_1})-\cdots-\s({i_{k-1}})}\prod_{k=1}^n \PP\left(G_k=t_k\right)\\
& &\hspace{1cm}=\PP\left((\I_1,\dots,\I_n)=(i_1,\dots,i_n)\right)\prod_{k=1}^n \PP\left(G_k=t_k\right)\, ,
\end{eqnarray*}
and we obtain the desired result. 
\end{proof}
Lemma \ref{lem:geom} implies that
\begin{eqnarray*}
\EE\left[T_n\Big| \I_1,\dots,\I_n\right]&=& \sum_{k=1}^{n}\frac{1}{1-\sigma_{k-1}}\, .
\end{eqnarray*}
In particular, $A\leq n$. We also have
\begin{eqnarray}
\label{eq:bound-A}
A &\leq&  \frac{1}{\alpha}\sum_{k=1}^n \frac{N-n}{N-k+1} \,\leq\,  \frac{1}{\alpha}(N-n)\log\left(\frac{N}{N-n}\right)\, .
\end{eqnarray}
It remains to control $B$. Clearly $B\leq n$, which shows that $V\leq 2\Delta^2 n$. Moreover, for $1\leq k\leq n$, we have
\begin{eqnarray*}
\PP\left(\exists !\, i<T_{n+1},\, \J_i=\I_k \Big| \I_1,\dots,\I_n\right)&=&\EE\left[\prod_{j=k}^n\left(1-\frac{\s(\I_k)}{\sigma_j}\right)^{\tau_{j+1}-1}\Big| \I_1,\dots,\I_n\right] \, .
\end{eqnarray*}
Using Lemma \ref{lem:geom} and the fact that the generating function of a geometric variable $G$ with parameter $p$ is given by $\EE\left[x^{G}\right]=\frac{px}{1-(1-p)x}$, we obtain
\begin{eqnarray*}
B &=&  \sum_{k=1}^n \prod_{j=k}^n \frac{1}{1+\frac{\s(\I_k)}{1-\sigma_j}}\, .
\end{eqnarray*}
Thanks to the inequality the inequality $\log(1+x)\geq x-x^2/2$ for $x\geq 0$, 
\begin{eqnarray*}
B &\leq& \sum_{k=1}^n \prod_{j=k}^n \frac{1}{1+\frac{\alpha}{N-j}}\, \leq \, \sum_{k=1}^n  \exp\left(-\alpha\sum_{j=k}^n\frac{1}{N-j}+\frac{1}{2}\sum_{j=k}^n\frac{1}{(N-j)^2}\right)\, .
\end{eqnarray*}
The second term in the exponent is always smaller than $1/2$. Using Riemann sums, we get
\begin{eqnarray*}
B &\leq & 2 \sum_{k=1}^n \exp\left(-\alpha\log\left(\frac{N-k+1}{N-n}\right)\right)\,=\,2\sum_{k=1}^n\left(\frac{N-n}{N-k+1}\right)^{\alpha}\\
&\leq & \frac{2}{1-\alpha}N\left(\frac{N-n}{N}\right)^\alpha \, ,
%2n\left(\frac{N-n}{n}\log\left(\frac{N}{N-n}\right)\right)^{\frac{\s(N)}{\s(1)}}\, ,
\end{eqnarray*}
Combined with \eqref{eq:bound-A}, this yields
\begin{eqnarray*}
V&\leq & \left(\frac{1}{\alpha}\left(\frac{N-n}{N}\right)^{1-\alpha}\log\left(\frac{N}{N-n}\right)+\frac{2}{1-\alpha}\right)\Delta^2 N\left(\frac{N-n}{N}\right)^\alpha\\
&\leq & \left(\frac{\e^{-1}}{\alpha(1-\alpha)}+\frac{2}{1-\alpha}\right)\Delta^2 N\left(\frac{N-n}{N}\right)^\alpha\\
&\leq & \frac{1/2+2\alpha}{\alpha(1-\alpha)}\Delta^2N\left(\frac{N-n}{N}\right)^\alpha\, ,
%V&\leq &\Delta^2\left(\frac{\s(1)}{\s(N)}(N-n)\log\left(\frac{N}{N-n}\right) + 2n\left(\frac{N-n}{n}\log\left(\frac{N}{N-n}\right)\right)^{\frac{\s(N)}{\s(1)}}\right)\\
%&\leq & \left(\frac{\s(1)}{\s(N)}+2\right)\Delta^2 N\left(\frac{N-n}{N}\log\left(\frac{N}{N-n}\right)\right)^{\frac{\s(N)}{\s(1)}}\,\leq \,\frac{v}{2}\, .
\end{eqnarray*}
where the second inequality is due to the fact that $\log(x)/x^{1-\alpha}\leq \e^{-1}/(1-\alpha)$ for all $x>0$.

\hfill\qedsymbol

\section{Proof of Theorem \ref{thm:polya}}
\label{sec:proof-polya}

The proof of Theorem \ref{thm:polya} relies on the construction of a martingale coupling $(W,Z)$, \textit{i.e.} of a coupling of $W$ and $Z$ such that $\EE\left[Z\Big| W\right]=W$.

Consider two urns, $\bU_d$ and $\bU_D$, each of them initially containing $N$ balls, labelled from $1$ to $N$. In each urn, arrange the balls from left to right by increasing order of their label. Then arrange $\bU_D$ and $\bU_d$ on top of one another. Each time we will pick a ball in $\bU_D$, we will pick the ball just below it in $\bU_d$. More precisely, we perform an infinite sequence of steps as follows: at step $1$, we pick a ball $B_1$ uniformly at random in $\bU_D$ and pick the ball just below it in $\bU_d$. They necessarily have the same label, say $j$. We let $\K_1=\L_1=j$, and add, on the right part of $\bU_D$, $D-1$ balls with label $j$, and, on the right part of $\bU_d$, $d-1$ balls with label $j$ and $D-d$ unlabelled balls. Note that, at the end of this step, the two urns still have the same number of balls, $N+D-1$. The first step is depicted in Figure \ref{fig:dD_Polya}. Then, at each step $t$, we pick a ball $B_t$ at random among the $N+(t-1)(D-1)$ balls of $\bU_D$ and choose the ball just below it in $\bU_d$. There are two different possibilities:
\begin{itemize}
\item if the ball drawn in $\bU_d$ is unlabelled and the one drawn in $\bU_D$ has label $j$, we let $\L_t=j$ and add $D-1$ balls with label $j$ on the right part of $\bU_D$, and $D-1$ unlabelled balls on the right part of $\bU_d$, .
\item if both balls have label $j$, and if $t$ corresponds to the $i^{\text{th}}$ time a labelled ball is drawn in $\bU_d$, we let $\L_t=\K_i=j$ and add $D-1$ balls with label $j$ on the right part of $\bU_D$, and $d-1$ balls with label $j$ and $D-d$ unlabelled balls on the right part of $\bU_d$;
\end{itemize}

\begin{figure}[!htb]
\caption{The ball $B_1$ has label $2$ ($N=5$, $d=3$, $D=4$).}
\label{fig:dD_Polya}
\begin{center}
\begin{tikzpicture}[>=latex, scale=0.7]
\fill[brown] (1,3) circle (0.3cm);
\fill[brown] (2,3) circle (0.3cm);
\fill[brown] (3,3) circle (0.3cm);
\fill[brown] (4,3) circle (0.3cm);
\fill[brown] (5,3) circle (0.3cm);
\fill[brown] (6,3) circle (0.3cm);
\fill[brown] (7,3) circle (0.3cm);
\fill[brown] (8,3) circle (0.3cm);

\node at (1,3.6) {$\color{blue} \mathbf{1}$};
\node at (2,3.6) {$\color{blue} \mathbf{2}$};
\node at (3,3.6) {$\color{blue} \mathbf{3}$};
\node at (4,3.6) {$\color{blue} \mathbf{4}$};
\node at (5,3.6) {$\color{blue} \mathbf{5}$};
\node at (6,3.6) {$\color{blue} \mathbf{2}$};
\node at (7,3.6) {$\color{blue} \mathbf{2}$};
\node at (8,3.6) {$\color{blue} \mathbf{2}$};6

\fill[brown] (1,0) circle (0.3cm);
\fill[brown] (2,0) circle (0.3cm);
\fill[brown] (3,0) circle (0.3cm);
\fill[brown] (4,0) circle (0.3cm);
\fill[brown] (5,0) circle (0.3cm);
\fill[brown] (6,0) circle (0.3cm);
\fill[brown] (7,0) circle (0.3cm);
\draw (8,0) circle (0.3cm);

\node at (1,0.6) {$\color{blue} \mathbf{1}$};
\node at (2,0.6) {$\color{blue} \mathbf{2}$};
\node at (3,0.6) {$\color{blue} \mathbf{3}$};
\node at (4,0.6) {$\color{blue} \mathbf{4}$};
\node at (5,0.6) {$\color{blue} \mathbf{5}$};
\node at (6,0.6) {$\color{blue} \mathbf{2}$};
\node at (7,0.6) {$\color{blue} \mathbf{2}$};

\draw[->,black] (2,4.6) -- (2,4);
\node at (2,5) {$B_1$};

\draw[<->] (5.8,2.4) -- (8.2,2.4);
\node at (7,1.9) {$D-1$};
\draw[<->] (0.8,2.4) -- (5.2,2.4);
\node at (2.5,1.9) {$N$};
\draw[<->] (5.8,-0.6) -- (7.2,-0.6);
\node at (6.5,-1.1) {$d-1$};

\node at (-1,3) {$\bU_D$};
\node at (-1,0) {$\bU_d$};

\draw[<-] (8.4,0.5)--(9.2,1.5);
\node at (10,1.8) {unlabelled};
\end{tikzpicture}
\end{center}
\end{figure}
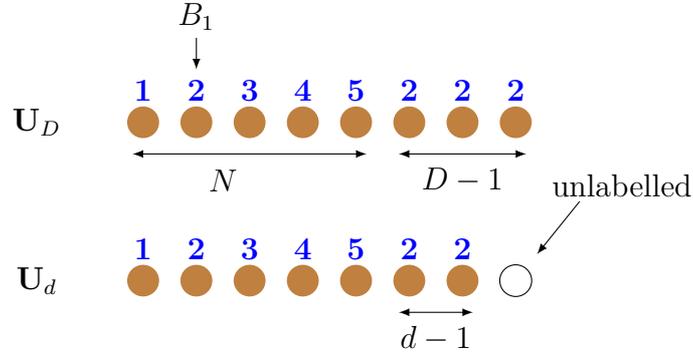

The sequence $(\K_1,\dots,\K_{n})$ records the labels of the first $n$ labelled balls picked in $\bU_d$, and $(\L_1,\dots,\L_{n})$ the labels of the first $n$ balls picked in $\bU_D$. Observe that $(\K_1,\dots,\K_{n})$ (resp. $(\L_1,\dots,\L_{n})$) is distributed as a $d$-Polya (resp. $D$-Polya) sample. Define
\begin{eqnarray*}
W&=&\val(\K_1)+\dots +\val(\K_n)\, ,\\
Z&=&\val(\L_1)+\dots +\val(\L_n)\, .
\end{eqnarray*}
Let us show that $1\leq i\leq n-1$, $\EE\left[\val(\L_{i+1})\Big| W\right]= \EE\left[\val(\L_i)\Big| W\right]$. Let $\{k_1,\dots_,k_n\}$ be a multiset of cardinality $n$ of elements of $\{1,\dots,N\}$, and let $\bA$ be the event $\{\K_1,\dots,\K_n\}=\{k_1,\dots_,k_n\}$ (accounting for the multiplicity of each label). Denote by $\cC_i$ the set of $D-1$ balls added at step $i$. Observe that, if $B_{i+1}\in\cC_i$, then $\L_{i+1}=\L_i$. Hence
\begin{eqnarray*}
\EE\left[\val(\L_{i+1})\Big| \bA\right]&=&\EE\left[\val(\L_i) \ind_{\{B_{i+1}\in\cC_i\}}\Big| \bA\right]+\EE\left[\val(\L_{i+1}) \ind_{\{B_{i+1}\not\in\cC_i\}}\Big| \bA\right]\, .
\end{eqnarray*}
We have
\begin{eqnarray*}
\EE\left[\val(\L_{i+1}) \ind_{\{B_{i+1}\not\in\cC_i\}}\Big| \bA\right]&=&\frac{1}{\PP(\bA)}\sum_{k=1}^N\val(k)\sum_{\ell=1}^N \PP\left(\L_i=\ell, \L_{i+1}=k, B_{i+1}\not\in\cC_i, \bA\right)\, .
\end{eqnarray*}
Notice that, on the event $B_{i+1}\not\in\cC_i$, the balls $B_i$ and $B_{i+1}$ are exchangeable. Hence $\PP\left(\L_i=\ell, \L_{i+1}=k, B_{i+1}\not\in\cC_i\right)=\PP\left(\L_i=k, \L_{i+1}=\ell, B_{i+1}\not\in\cC_i\right)$. Moreover, permuting $B_{i}$ and $B_{i+1}$ can not affect the multiset $\{\K_1,\dots,\K_n\}$. Hence
\begin{eqnarray*}
\EE\left[\val(\L_{i+1}) \ind_{\{B_{i+1}\not\in\cC_i\}}\Big| \bA\right]&=&\EE\left[\val(\L_{i}) \ind_{\{B_{i+1}\not\in\cC_i\}}\Big| \bA\right]\, ,
\end{eqnarray*}
and $\EE\left[\val(\L_{i+1})\Big| W\right]=\EE\left[\val(\L_{i})\Big| W\right]$. We get that, for all $1\leq i\leq n$,
\begin{eqnarray*}
\EE\left[\val(\L_i)\Big| W\right]&=& \EE\left[\val(\L_1)\Big| W\right]\,=\, \EE\left[\val(\K_1)\Big| W\right]\,=\,W/n\, ,
\end{eqnarray*}
where the last equality comes from the exchangeability of $(\K_1,\dots,\K_n)$.
\hfill\qedsymbol

\bibliographystyle{abbrv}
\bibliography{sampling}

\begin{thebibliography}{10}

\bibitem{alexander1989counterexample}
K.~S. Alexander et~al.
\newblock A counterexample to a correlation inequality in finite sampling.
\newblock {\em The Annals of Statistics}, 17(1):436--439, 1989.

\bibitem{bardenet}
R.~{Bardenet} and O.-A. {Maillard}.
\newblock {Concentration inequalities for sampling without replacement}.
\newblock {\em ArXiv e-prints}, Sept. 2013.

\bibitem{configuration1}
B.~Bollob{\'a}s.
\newblock A probabilistic proof of an asymptotic formula for the number of
  labelled regular graphs.
\newblock {\em European Journal of Combinatorics}, 1(4):311--316, 1980.

\bibitem{configuration2}
B.~Bollob{\'a}s.
\newblock {\em Random graphs}.
\newblock Springer, 1998.

\bibitem{concentration}
S.~Boucheron, G.~Lugosi, and P.~Massart.
\newblock {\em Concentration inequalities}.
\newblock Oxford University Press, Oxford, 2013.

\bibitem{gordon}
L.~Gordon.
\newblock Successive sampling in large finite populations.
\newblock {\em Ann. Statist.}, 11(2):702--706, 1983.

\bibitem{hoeffding}
W.~Hoeffding.
\newblock Probability inequalities for sums of bounded random variables.
\newblock {\em J. Amer. Statist. Assoc.}, 58:13--30, 1963.

\bibitem{successive4}
L.~Holst.
\newblock Some limit theorems with applications in sampling theory.
\newblock {\em Ann. Statist.}, 1:644--658, 1973.

\bibitem{joag1983negative}
K.~Joag-Dev and F.~Proschan.
\newblock Negative association of random variables with applications.
\newblock {\em The Annals of Statistics}, pages 286--295, 1983.

\bibitem{luh2014large}
K.~{Luh} and N.~{Pippenger}.
\newblock Large-deviation bounds for sampling without replacement.
\newblock {\em American Mathematical Monthly}, 121(5):449--454, 2014.

\bibitem{ordering3}
A.~M{\"u}ller and D.~Stoyan.
\newblock {\em Comparison methods for stochastic models and risks}.
\newblock Wiley Series in Probability and Statistics. John Wiley \& Sons, Ltd.,
  Chichester, 2002.

\bibitem{permutation2}
J.~{Pitman} and N.~M. {Tran}.
\newblock {Size biased permutation of a finite sequence with independent and
  identically distributed terms}.
\newblock {\em ArXiv e-prints}, Oct. 2012.

\bibitem{permutation1}
J.~Pitman and M.~Yor.
\newblock The two-parameter poisson-dirichlet distribution derived from a
  stable subordinator.
\newblock {\em The Annals of Probability}, pages 855--900, 1997.

\bibitem{successive1}
B.~Ros{\'e}n.
\newblock Asymptotic theory for successive sampling with varying probabilities
  without replacement. {I}, {II}.
\newblock {\em Ann. Math. Statist.}, 43:373--397; ibid. 43 (1972), 748--776,
  1972.

\bibitem{serfling}
R.~J. Serfling.
\newblock Probability inequalities for the sum in sampling without replacement.
\newblock {\em Ann. Statist.}, 2:39--48, 1974.

\bibitem{ordering}
M.~Shaked and J.~G. Shanthikumar.
\newblock {\em Stochastic orders}.
\newblock Springer Series in Statistics. Springer, New York, 2007.

\bibitem{strassen}
V.~Strassen.
\newblock The existence of probability measures with given marginals.
\newblock {\em Ann. Math. Statist.}, 36:423--439, 1965.

\bibitem{ordering2}
R.~Szekli.
\newblock {\em Stochastic ordering and dependence in applied probability},
  volume~97 of {\em Lecture Notes in Statistics}.
\newblock Springer-Verlag, New York, 1995.

\bibitem{successive3}
Y.~Yu.
\newblock On the inclusion probabilities in some unequal probability sampling
  plans without replacement.
\newblock {\em Bernoulli}, 18(1):279--289, 2012.

\end{thebibliography}
\end{document}